\newlength\figureheight 
\newlength\figurewidth 
\newtheorem{lem}{Lemma}
\newtheorem{thm}{Theorem}
\newtheorem{defn}{Definition}
\newtheorem{rem}{Remark}
\title{Optimization on the biorthogonal manifold}
\author{Klaus Glashoff\\
\small{USI Lugano}\\
\small{University of Hamburg}
 \and 
 Michael M. Bronstein\\
\small{USI Lugano}\\
\small{Tel Aviv University}\\
\small{Intel Perceptual Computing}\\
}
\begin{document}
\maketitle

\abstract{
In this paper, we consider optimization problems w.r.t. to pairs of orthogonal matrices $XY = I$. Problems of this form arise in several applications such as finding shape correspondence in computer graphics. 
We show that the space of such matrices is a Riemannian manifold, which we call the {\em biorthogonal manifold}. 
To our knowledge, this manifold has not been studied before. We give expressions of tangent space projection, exponential map, and retraction operators of the biorthogonal manifold, and discuss their numerical implementation. 
}

\section{Introduction}
 \subsection{Manifold optimization}

 The term {\em manifold-} or {\em manifold-constrained optimization} refers to a class of problems of the form 
\begin{eqnarray}
\min_{X \in \mathcal{M}} f(X), 
\label{eq:prob0}
\end{eqnarray}
where $f$ is a smooth real-valued function, $X$ is an $m\times n$ real matrix, and $\mathcal{M}$ is some Riemannian submanifold of $\mathbb{R}^{m\times n}$. 
%
%
%
The main idea of manifold optimization is to treat the objective as a function $f:\mathcal{M} \rightarrow \mathbb{R}$  defined on the manifold,  and perform descent on the manifold itself rather than in the ambient Euclidean space. 

A key difference between classical and manifold optimization is that manifolds do not have a global vector space structure and are only locally homeomorphic to a Euclidean space, referred to as as the {\em tangent space}. 
The intrinsic (Riemannian) gradient $\nabla_{\mathcal{M}}f(X)$ of $f$ at point $X$ on a manifold is a vector in the tangent space $T_{X}\mathcal{M}$ that can be obtained by projecting the standard (Euclidean) gradient  $\nabla f(X)$ onto $T_{X}\mathcal{M}$ by means of a {\em projection} operator $P_X$ (see Figure~\ref{fig:manopt}). 
A step along the intrinsic gradient direction is performed in the tangent space. In order to obtain the next iterate, the point in the tangent plane is mapped back to the manifold by means of a {\em retraction} operator $R_X$, which is typically an approximation of the exponential map. 
%
%

\begin{figure}[h!]
\centering
\begin{overpic}
	[width=0.5\linewidth]{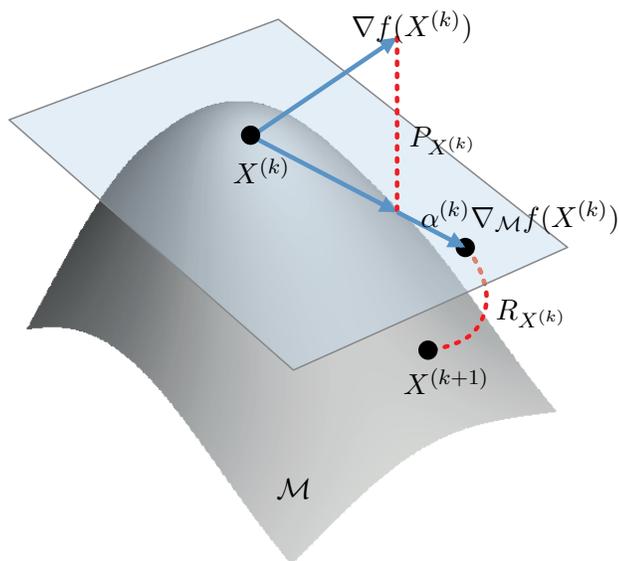}
	\put(39.5,69.5){\small$X^{(k)}$}	
	\put(60,95){\small$\nabla f(X^{(k)})$}	
	\put(70,76){\small$P_{X^{(k)}}$}	
	\put(72,62){\small$\alpha^{(k)}\nabla_{\mathcal{M}}f({X^{(k)}})$}	
	\put(85,46){\small$R_{X^{(k)}}$}	
	\put(69,33){\small$X^{(k+1)}$}	
	\put(47,15){\small$\mathcal{M}$}	
\end{overpic}\vspace{2mm}
\caption{\label{fig:manopt} \small Illustration of a typical step of a first-order manifold optimization algorithm: the Euclidean gradient $\nabla f$ is projected on the tangent space by means of a projection operator $P_X$, producing a Riemannian gradient $\nabla_\mathcal{M} f$. A line search is performed along the gradient direction to determine the step size $\alpha$. Finally, the result is mapped back to the manifold by means of a retraction operator $R_X$.  }
\end{figure}

\RestyleAlgo{boxed}
\DecMargin{0.75em}
\begin{algorithm}[H]

\Repeat{convergence}{
1. Compute the extrinsic gradient $\nabla f(X^{(k)})$ \\
2. {\em Projection:} 
$\nabla_{\mathcal{M}} f(X^{(k)}) = P_{X^{(k)}} (\nabla f(X^{(k)}))$ \\
3. Compute the step size $\alpha^{(k)}$ along the descent direction \\
4. {\em Retraction:} $X^{(k+1)} = R_{X^{(k)}}(- \alpha^{(k)} \nabla_{\mathcal{M}} f(X^{(k)}))$ \\
}
\caption{\label{algo:manopt} \small Conceptual algorithm for optimization on manifold $\mathcal{M}$.}
\end{algorithm}

\vspace{5mm}

A conceptual gradient descent-like manifold optimization is presented in Algorithm~\ref{algo:manopt}. 
Besides the standard ingredients of first-order optimization methods (computation of the gradient and line search), it contains two special steps: projection on the tangent space and retraction. For many manifolds, these operations have a closed-form expression \cite{Absil2008}.

First manifold optimization algorithms computed the exponential map to perform the mapping from the tangent space to the manifold \cite{luenberger1972gradient,Gabay1985,helmke2012optimization,smith1994optimization,mahony1994optimization}. In many cases, however, such a computation is expensive. 
The powerful concept of a retraction, a computationally-efficient approximation of the exponential map, has entered the discussion around the year 2000 (see \cite{Absil2008} and a short historical overview in \cite{Absil2012}). 
Since then, efficient retraction operators have been derived for several manifolds. 

 %


 \subsection{Problems on the biorthogonal manifold}
 
 In this paper, we are interested in minimizing functions of the form $f(X,X^{-1})$ for non-singular square matrices, which can be cast as 
 $$
 \min_{X,Y} f(X,Y) \hspace{5mm} \mathrm{s.t.} \hspace{5mm} XY = I. 
 $$
Our main interest is in the set of pairs of biorthogonal matrices $(X,Y) : XY = I$, which we show to be a manifold in the next section.  
Such problems arise in computer graphics in relation to computation of functional maps between non-rigid shapes \cite{Eynard2016}, as discussed in Section~\ref{sec:exp}. 
%

\section{The biorthogonal manifold $BO(n)$}
 \subsection{Definition of $BO(n)$}
In the following, we use $M(n)=\mathbb{R}^{n\times n}$ to denote the space of real $n\times n$ matrices. The {general linear group} $GL(n)$ of invertible real $n\times n$ matrices is an $n^2$-dimensional submanifold of $M(n)$. It has a group structure w.r.t. the standard matrix multiplication operation, with identity element $I$ and inverse element $X^{-1}$.

We consider the $2n^2$-dimensional product manifold $\mathcal{G}(n)=GL(n) \times GL(n)$ of pairs of invertible matrices, which 
form a \emph{Lie group} (i.e., a finite dimensional differentiable manifold as well as a group with  differentiable operations \cite{Lee2012}) w.r.t. the pair product operation  
$$(X_1,X_2) \star (Y_1,Y_2)=(X_1Y_1,Y_2X_2);$$ 
note that the order of matrices in the first and second component of the product is different. 

\begin{defn}
Let $BO(n) = \{ (X ,Y) : XY=I \} \subset \mathcal{G}(n)$ be the subset of orthogonal pairs of invertible $n\times n$ real matrices.\footnote{An equivalent definition is $BO(n)=\left\{(X,X^{-1}): X\in GL(n)\right\}$. }
\end{defn}
\begin{lem} 
$BO(n)$ is a $n^2$-dimensional submanifold of  $\mathcal{G}(n)$.
\end{lem}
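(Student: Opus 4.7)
The plan is to exhibit $BO(n)$ as the zero set of a smooth map from $\mathcal{G}(n)$ to $M(n)$ for which $0$ is a regular value, and then invoke the standard preimage (regular value) theorem for submanifolds. Concretely, I define
$$ F : \mathcal{G}(n) \longrightarrow M(n), \qquad F(X,Y) \;=\; XY - I. $$
This is smooth since it is a polynomial in the matrix entries, and by construction $BO(n) = F^{-1}(0)$. If I can show that $DF_{(X,Y)}$ is surjective at every $(X,Y) \in BO(n)$, the preimage theorem will immediately yield that $BO(n)$ is an embedded submanifold of $\mathcal{G}(n)$ of codimension equal to $\dim M(n) = n^2$, hence of dimension $2n^2 - n^2 = n^2$, which is the required statement.

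The computation of the differential is routine. Identifying $T_{(X,Y)}\mathcal{G}(n)$ with $M(n)\times M(n)$ (since $\mathcal{G}(n)$ is open in $M(n)\times M(n)$), one gets
$$ DF_{(X,Y)}(U,V) \;=\; UY + XV, \qquad (U,V) \in M(n)\times M(n). $$
To prove surjectivity at any $(X,Y) \in BO(n)$, I use the constraint $XY=I$, which gives $Y^{-1}=X$. For an arbitrary target $W \in M(n)$, the choice $V = 0$ and $U = WX$ produces $DF_{(X,Y)}(U,V) = WXY = W$. Thus $DF_{(X,Y)}$ is surjective, so $0$ is a regular value of $F$ on $BO(n)$.

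There is essentially no obstacle here: the only substantive point is that the biorthogonality relation itself supplies the inverse of $Y$ needed to exhibit the preimage of an arbitrary $W$. Once surjectivity is in hand, the preimage theorem (see, e.g., \cite{Lee2012}) closes the argument and simultaneously gives the dimension. An alternative route, if one preferred a more direct construction, would be to use the diffeomorphism $GL(n) \to BO(n)$, $X \mapsto (X,X^{-1})$ (which is smooth with smooth inverse $(X,Y)\mapsto X$) to transport the manifold structure of $GL(n)$; but the regular-value argument above is cleaner and automatically identifies $BO(n)$ as a submanifold of $\mathcal{G}(n)$ rather than merely a manifold in the abstract.
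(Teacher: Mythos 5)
Your proposal is correct and follows essentially the same route as the paper: both exhibit $BO(n)=F^{-1}(0)$ for $F(X,Y)=XY-I$, compute the differential $UY+XV$, verify surjectivity by an explicit preimage (you take $V=0$, $U=WX$; the paper takes $U=0$, $V=X^{-1}W$), and conclude via the preimage/regular value theorem that the dimension is $2n^2-n^2=n^2$. Even your closing remark about the alternative embedding $X\mapsto(X,X^{-1})$ mirrors a remark in the paper.
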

\begin{proof}
$BO(n) = F^{-1}(0)$, where $F: \mathcal{G}(n) \rightarrow M(n)$ is defined by $F(X,Y)=XY-I$. 
The map $dF_{(X_0,Y_0)}(X,Y)=X_0 Y+X Y_0$ 
is surjective because the linear system 
$$X_0Y+XY_0=Z$$ 
of $n^2$ linear equations in  $2n^2$ unknowns 
has a solution $(\hat{X},\hat{Y})$ for any $Z\in M(n)$. 
In particular, this system has the special solution $\hat{X}=0, \hat{Y}=X_0^{-1}Z$. This implies that $dF_{(X_0,Y_0)}$ is a submersion, and so $BO(n)=F^{-1}(0)$ is a manifold of dimension $2n^2-n^2=n^2$ by virtue of the Preimage Theorem  (\cite{Pollak1974}, p. 21). 
\end{proof}

\begin{rem}
By an alternative argument, the map $X \rightarrow(X,X^{-1})$ is an \emph{embedding} of $GL(n)$ into $\mathcal{G}(n)$, and so its image $BO(n)$ is a diffeomorphic image of $GL(n)$. 
\end{rem}

\begin{rem}
\label{rem:Lie}
 It is easy to check that $BO(n)$ is a \emph{Lie subgroup} of the Lie group $\mathcal{G}(n)$, as follows: for any $(X,X^{-1}), (Y,Y^{-1})\in BO(n)$, we have closure w.r.t. to the pair product operation,
 $$(X,X^{-1})\star (Y,Y^{-1}) = (XY,Y^{-1}X^{-1})=(XY,(XY)^{-1})\in BO(n);$$
 inverse element $(X,X^{-1})^{-1}=(X^{-1},X)\in BO(n)$ satisfying $$(X,X^{-1})^{-1}\star(X,X^{-1}) = (X^{-1}X,X^{-1}X) = (I,I),$$ and identity element 
 $(I,I)\star(X,X^{-1}) = (X,X^{-1})$. 
 Finally, the mapping $((X,X^{-1}),(Y,Y^{-1})) \rightarrow (X^{-1}Y,(X^{-1}Y)^{-1})$ is smooth. 
\end{rem}

  \subsection{Tangent space and its projection operator}
  The tangent space of the manifold $BO(n)$ at a point $(X_0,Y_0)$ is given by  
\begin{equation}
\label{equ:tangentspace}
T_{(X_0,Y_0)}BO(n)=\left\{ ( X_0Y+XY_0=0 : X,Y\in M(n) \right\}.
\end{equation}
 This can be seen by performing a first-order approximation of the equation $X_0Y_0=I$ as follows: if $(X,Y)$ is a vector in the tangent space, then $$(X_0+X)(Y_0+Y)=I+o(t)$$ for $t\rightarrow 0$, and this leads directly to (\ref{equ:tangentspace}).  
  This implies that the tangent space at the identity $(I,I)$ of $BO(n)$ is $T_{(I,I)}BO(n)=\left\{ ( X,-X) : X \in M(n) \right\}$.
  
A crucial component of optimization on the biorthogonal manifold is the  \emph{projection operator}
    $P_{(X_0,Y_0)}: \mathcal{G}(n)  \rightarrow  T_{(X_0,Y_0)}BO(n)$.
%
The image $(\hat{X},\hat{Y})=P_{(X_0,Y_0)}(\Phi,\Psi)$ of a given $(\Phi,\Psi)\in \mathcal{G}(n)$ is the unique solution of the quadratic optimization problem 
\begin{equation}
\label{problem:Sylvester}
\min_{X,Y} {||X-\Phi||^2_\mathrm{F}+||Y-\Psi||^2_\mathrm{F} } \,\,\, \mathrm{s.t.} \,\,\, X_0Y+XY_0=0
\end{equation}
By means of the coordinate transformation $\tilde{X} = (X-\Phi)$,  $\tilde{Y}=(Y-\Psi)$ we obtain the problem of finding the minimum-norm solution of the linear system 
\begin{equation}
\label{equ:Sylvester}
 X_0\tilde{Y}+\tilde{X}Y_0=-X_0\Psi-\Phi Y_0.
\end{equation} 

Minimizing $||\tilde{X}||^2_\mathrm{F}+||\tilde{Y}||^2_\mathrm{F}$ under the constraint of this so called  \emph{Generalized Sylvester} type equation (\ref{equ:Sylvester}) is a well known problem of numerical analysis, a solution method for which was given by Stewart \cite{Stewart1992} and, more generally, in  \cite{ShimChen2003}. As the set of matrices $(X,Y)\in \mathcal{G}(n)$ satisfying the constraints of (\ref{problem:Sylvester}) is a nonempty linear subspace, there is a unique solution to the quadratic optimization problem (we defer the details to Section~\ref{sec:numerics}).

  \subsection{Exponential function and retraction}
  \label{retrexp}
Another important ingredient of modern optimization algorithms on manifolds is the \emph{retraction} mapping from the tangent bundle onto the manifold \cite{Absil2008}. A function $R_X(U)$ defined on the tangent bundle of a manifold is a retraction if it approximates the exponential map of the manifold in the following sense: 
$$R_X(tU)=\exp(X,tU)+o(t),$$
for $t\rightarrow 0$. 
Of course, the exponential map itself could be used to perform retraction, and this has been the first choice in the beginnings of the theory of optimization on manifolds (see  \cite{luenberger1972gradient}, \cite{Gabay1985}, \cite{Absil2008}). Later, other computationally more effective retractions have been developed for special manifolds. One example is the so-called \emph{Cayley transformation}, used especially for the minimization of functions on the orthogonal group $O(n)$ and the special orthogonal group $SO(n)$ \cite{Edelman1998,yamada2003orthogonal,plumbley2004lie,Wen2013}.
Specific retractions for various manifolds have been effectively implemented in the Manopt MATLAB toolbox \cite{Boumal2014}. 

%


The following theorem provides for a general way of constructing retraction operators. 
\begin{thm}[Projective retraction] Let $\mathcal{M}$ be a submanifold of an Euclidean space  $\mathcal{E}$, which is $C^k$ around $X_0 \in \mathcal{M}$. Let $P_\mathcal{M}$ denote the projection operator from $\mathcal{E}$ to $\mathcal{M}$. Then, the function 
$$
R: (X,U) \rightarrow P_\mathcal{M} (X+U)$$ is a retraction around $X_0$.
\end{thm}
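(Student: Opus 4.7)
The plan is to verify the two defining conditions of a retraction: that $R_X(0) = X$ and that the differential $dR_X|_0$ acts as the identity on $T_X\mathcal{M}$, supplemented by a smoothness argument so that the expansion $R_X(tU) = \exp(X, tU) + o(t)$ actually makes sense. The centering condition is trivial: since $X \in \mathcal{M}$, the closest point in $\mathcal{M}$ to $X$ is $X$ itself, so $P_\mathcal{M}(X+0) = X$. All the real content is in controlling $P_\mathcal{M}$ on points \emph{off} the manifold and in expanding it to first order along tangent directions.

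First I would establish that $P_\mathcal{M}$ is well-defined and sufficiently smooth in an open neighborhood of $X_0$ in $\mathcal{E}$. This is the standard \emph{tubular neighborhood} construction: consider the normal bundle $N\mathcal{M}$ of $\mathcal{M}$ in $\mathcal{E}$ and the map $E : N\mathcal{M} \to \mathcal{E}$ defined by $E(X,v) = X + v$ for $v \in N_X\mathcal{M}$. At the zero section over $X_0$, the differential of $E$ decomposes $\mathcal{E} = T_{X_0}\mathcal{M} \oplus N_{X_0}\mathcal{M}$ as an isomorphism, so the inverse function theorem provides a local $C^{k-1}$ inverse. Composing this inverse with the projection $N\mathcal{M} \to \mathcal{M}$ gives $P_\mathcal{M}$, which is therefore $C^{k-1}$ in a neighborhood of $X_0$, and uniquely realizes the nearest-point projection there.

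Next I would verify the differential condition. Fix $U \in T_{X_0}\mathcal{M}$ and consider the curve $t \mapsto R_{X_0}(tU) = P_\mathcal{M}(X_0 + tU)$. Using the tubular neighborhood chart, write $X_0 + tU = P_\mathcal{M}(X_0 + tU) + \nu(t)$, where $\nu(t) \in N_{P_\mathcal{M}(X_0+tU)}\mathcal{M}$. Since $U$ is tangent to $\mathcal{M}$ at $X_0$, a first-order analysis shows $\nu(t) = O(t^2)$: the normal component of the displacement $tU$ relative to $\mathcal{M}$ vanishes to first order. Consequently, $R_{X_0}(tU) = X_0 + tU + O(t^2)$, which gives $dR_{X_0}|_0(U) = U$ and, matching the first-order Taylor expansion of the exponential map, also yields the required $R_{X_0}(tU) = \exp(X_0, tU) + o(t)$.

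The main obstacle I anticipate is the smoothness and well-definedness of $P_\mathcal{M}$ near $X_0$: the nearest-point projection onto a submanifold is generally only \emph{locally} single-valued, so the statement must be read as holding in a tubular neighborhood. Once the inverse function theorem has been invoked to furnish this neighborhood, the remaining steps reduce to a transparent first-order expansion using the splitting $\mathcal{E} = T_{X_0}\mathcal{M} \oplus N_{X_0}\mathcal{M}$.
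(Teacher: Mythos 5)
Your argument is correct in substance, but note that the paper does not actually prove this theorem --- it defers entirely to Absil and Malick (2012) for the background theory and proof --- so any genuine proof you give is necessarily a "different route," and yours is essentially the standard one underlying that reference. The two conditions you verify, $R_{X_0}(0)=X_0$ and $dR_{X_0}|_0 = \mathrm{id}_{T_{X_0}\mathcal{M}}$, are exactly the first-order characterization of a retraction that the paper uses ($R_X(tU)=\exp(X,tU)+o(t)$), and your tubular-neighborhood construction (inverting the endpoint map $E(X,v)=X+v$ on the normal bundle via the inverse function theorem) is the standard way to obtain local single-valuedness and $C^{k-1}$ smoothness of the nearest-point projection. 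Your key step --- that the normal discrepancy $\nu(t)=X_0+tU-P_\mathcal{M}(X_0+tU)$ vanishes to first order --- is stated a little loosely but is easily made precise: writing $\gamma(t)=P_\mathcal{M}(X_0+tU)$, the derivative $\nu'(0)=U-\gamma'(0)$ lies in $N_{X_0}\mathcal{M}$ (because $\nu$ is a curve in the normal bundle vanishing at $t=0$) and also in $T_{X_0}\mathcal{M}$ (because both $U$ and $\gamma'(0)$ do), hence is zero, which gives $dR_{X_0}|_0(U)=U$. Two small caveats: the argument genuinely needs $k\geq 2$, since for a merely $C^1$ submanifold the nearest-point projection need not be locally single-valued or differentiable, and with only $C^{k-1}=C^1$ regularity of $P_\mathcal{M}$ you obtain $\nu(t)=o(t)$ rather than the claimed $O(t^2)$ --- but $o(t)$ is precisely what the paper's definition of a retraction requires, so the conclusion stands.
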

\begin{proof}
We refer the reader to \cite{Absil2012} for the background theory and proof. 
\end{proof}


As we have seen in Remark \ref{rem:Lie}, our biorthogonal manifold $BO(n)$ is a Lie subgroup of $\mathcal{G}(n)$. 
As the exponential map on $GL(n)$ is the ordinary matrix exponential 
$$e^{U}=I+U+\frac{1}{2!}U^2+\frac{1}{3!}U^3+ \ldots,$$ 
the exponential map on $\mathcal{G}(n)$ is given by
\begin{equation}
\label{equ:exp}
\exp(U,V)=(e^U,e^V)
\end{equation}
for $(U,V)\in T_{(X_0,Y_0)}BO(n)$

\begin{thm}
\label{them:exp1}
The exponential map on $BO(n) =\left\{(X,X^{-1}):X\in GL(n)\right\}$  
is given by $\exp(X)=(e^{X},e^{-X})$. 
\end{thm}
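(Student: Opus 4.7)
The plan is to exhibit a one-parameter subgroup of $BO(n)$ passing through $(I,I)$ with initial velocity $(X,-X)$; by the defining property of the Lie-group exponential, its value at $t=1$ is then the image of that tangent vector. I interpret the statement's notation $\exp(X)$ as the shorthand $\exp_{(I,I)}(X,-X)$, where $(X,-X)\in T_{(I,I)}BO(n)$ is the representative tangent vector at the identity.

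Concretely, I would consider the curve $\gamma:\mathbb{R}\to\mathcal{G}(n)$ given by $\gamma(t)=(e^{tX},e^{-tX})$, and check three things. First, $\gamma$ takes values in $BO(n)$: since $tX$ and $-tX$ commute, the classical identity $e^{tX}e^{-tX}=e^{0}=I$ applies, so each $\gamma(t)$ is a biorthogonal pair. Second, $\gamma(0)=(I,I)$ and differentiation gives $\gamma'(0)=(X,-X)$, which is precisely the prescribed tangent vector in view of the characterization $T_{(I,I)}BO(n)=\{(X,-X):X\in M(n)\}$ derived earlier. Third, $\gamma$ is a one-parameter subgroup with respect to the pair product $\star$ of $\mathcal{G}(n)$: recalling that $(X_1,X_2)\star(Y_1,Y_2)=(X_1Y_1,Y_2X_2)$, the commutativity of $sX$ and $tX$ yields
\[
\gamma(s)\star\gamma(t)=(e^{sX}e^{tX},\,e^{-tX}e^{-sX})=(e^{(s+t)X},e^{-(s+t)X})=\gamma(s+t).
\]

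Combining these observations with Remark~\ref{rem:Lie}, which identifies $BO(n)$ as a Lie subgroup of $\mathcal{G}(n)$, the curve $\gamma$ is the unique one-parameter subgroup of $BO(n)$ through the identity with initial velocity $(X,-X)$. Consequently $\exp_{(I,I)}(X,-X)=\gamma(1)=(e^X,e^{-X})$, which is the claimed formula. An equivalent way to phrase this argument is to restrict the ambient exponential on $\mathcal{G}(n)$ from equation~(\ref{equ:exp}) to the Lie subalgebra $\{(X,-X):X\in M(n)\}$; the content of the theorem is then that the restricted image actually lies in $BO(n)$, a fact guaranteed precisely by the commutativity of $X$ with $-X$.

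The only mildly subtle step is the one-parameter subgroup check, which requires keeping track of the non-standard $\star$ product that reverses the order of the second factor; however, as seen above, the commutativity of $sX$ with $tX$ makes the computation go through irrespective of the ordering, so no serious obstacle arises.
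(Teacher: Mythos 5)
Your proof is correct. The paper disposes of the theorem in one line by citing Proposition 15.19 of Lee's book: the exponential of a Lie subgroup is the restriction of the ambient exponential of $\mathcal{G}(n)$ (given in equation~(\ref{equ:exp}) as $\exp(U,V)=(e^U,e^V)$) to the subalgebra $T_{(I,I)}BO(n)=\{(X,-X)\}$. Your closing remark about ``restricting the ambient exponential to the Lie subalgebra'' is exactly that argument; the main body of your proposal instead proves the needed instance of Lee's proposition from scratch, by exhibiting $\gamma(t)=(e^{tX},e^{-tX})$ as the one-parameter subgroup of $BO(n)$ with initial velocity $(X,-X)$ and invoking the uniqueness of one-parameter subgroups. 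The trade-off is the usual one: the paper's citation is shorter, while your direct verification is self-contained and makes explicit the two facts the citation hides, namely that the candidate curve actually lands in $BO(n)$ (because $e^{tX}e^{-tX}=I$) and that the homomorphism property survives the order-reversing second component of the $\star$ product,
\[
\gamma(s)\star\gamma(t)=\bigl(e^{sX}e^{tX},\,e^{-tX}e^{-sX}\bigr)=\gamma(s+t),
\]
which goes through only because $sX$ and $tX$ commute. Your reading of the slightly abusive notation $\exp(X)$ as $\exp_{(I,I)}(X,-X)$ is also the intended one. No gaps.
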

\begin{proof}
By Proposition 15.19 of \cite{Lee2012}, the exponential map on the subgroup $BO(n)$ is the restriction of the exponential on $\mathcal{G}(n)$ to the tangent space $T_{(I,I)}BO(n)=\left\{ ( X,-X) : X \in M(n) \right\}$. This implies that the exponential on $BO(n)$ can  be expressed as $\exp(X)=(e^{X},e^{-X})$.
\end{proof}

\section{Numerical implementation}
\label{sec:numerics}

The numerical implementation of optimization methods on the manifold $BO(n)$ requires two ingredients, namely, an algorithm for projecting vectors onto the tangent space and a retraction operator. For the latter, we take the exponential function defined in Theorem~\ref{them:exp1}. 
For matrices of moderate size ($n$ ranging between $10^2-10^3$), such an exponential map can be efficiently computed. This applies in particular to our main example in the results section. 


The computation of the projection $P_{(X_0,Y_0)}$ on the tangent space 
%
can be done by standard numerical linear algebra, utilizing two singular value decompositions for the solution of a \emph{Generalized Sylvester Equation} (GSE). This method goes back to \cite{Stewart1992} and has been generalized since then to a wider class of GSE's by
\cite{ShimChen2003}\footnote{The \emph{Sylvester} equation is a linear equation of the form $AX+XB=C (\cite{BartelsStewart1972}).$ In \cite{Stewart1992}, the \emph{GSE} $AX+YB=C$ is treated for singular and even rectangular matrices, too. The setting of \cite{ShimChen2003} is even more general, covering equations of the form $AXB^*+CYD^*=E$.}.
For sake of completeness we will show how to construct the solution of this equation in detail. 

\subsection{Numerical computation of $P_{(X_0,Y_0)}$}

Given the matrices $(X_0,Y_0) \in BO(n)$ and $\Phi,\Psi \in GL(n)$, our goal is to solve 
\begin{equation}
\label{problem:Sylvester2}
\min_{X,Y \in M(n)} {||X||^2_\mathrm{F} + ||Y||^2_\mathrm{F} \,\,\, \mathrm{s.t.} \,\,\, X_0Y+XY_0=-X_0\Psi-\Phi Y_0.}
\end{equation}
Let $X_0=U_xS_xV_x^\top$  and $Y_0=U_yS_yV_y^\top$ denote the singular value decompositions of $X_0$ and $Y_0$, where $S_x=\mathrm{diag}(\alpha_1, \hdots , \alpha_n)$ and $S_y=\mathrm{diag}(\beta_1, \hdots ,\beta_n)$ and the singular values $\alpha_i, \beta_i > 0$ since $X_0$ and $Y_0$ are invertible. 
Let us define $C=-X_0\Psi-\Phi Y_0$. Then, the linear system 
$$X_0Y+XY_0=-X_0\Psi-\Phi Y_0$$ can be cast as
\begin{equation}
\label{eq:linear2}
U_xS_xV_x^TY+XU_yS_yV_y^T=C
\end{equation}
which, by the orthogonality of $U_x$ and $V_y$, transforms into
\begin{equation}
\label{eq:linear3}
S_x\hat{Y}+\hat{X}S_y=\hat{C},
\end{equation}
where 
$\hat{X}=U_x^\top XU_y$, $\hat{Y}=V_x^\top YV_y$, and $\hat{C}=U_x^\top CV_y$. 
These orthogonal transformations do not change the value of the cost function of problem (\ref{problem:Sylvester2}), thus solving~  (\ref{problem:Sylvester2}) is equivalent to 
\begin{equation}
\label{problem:Sylvester2_}
\min_{\hat{X},\hat{Y} \in M(n)} {||\hat{X}||^2_\mathrm{F}+||\hat{Y}||^2_\mathrm{F} \,\,\, \mathrm{s.t.} \,\,\, S_x\hat{Y}+\hat{X}S_y=\hat{C}.}
\end{equation}

The diagonality of $S_x$ and $S_y$ decouples problem~(\ref{problem:Sylvester2_}) into $n^2$ independent optimization problems
%
\begin{equation}
\min_{\hat{x}_{ij},\hat{y}_{ij}}{\hat{x}_{ij}^2+\hat{y}_{ij}^2 } \,\,\, \mathrm{s.t.} \,\,\, \alpha_i \hat{x}_{ij} +\hat{y}_{ij}\beta_j=\hat{c}_{ij}
\end{equation}
The solution of this scalar problem is given by
\begin{equation}
\label{equ:solution}
\hat{x}_{ij}=\frac{\hat{c}_{ij}\alpha_{i}}{\alpha_i^2+\beta_j^2}, \,\,\,\,\, \hat{y}_{ij}=\frac{\hat{c}_{ij}\alpha_{j}}{\alpha_i^2+\beta_j^2}.
\end{equation}
 Then, the solution $(X,Y)$ is obtained as 
 $X = U_x\hat{X}U_y^\top$ and  $Y = V_x\hat{Y}V_y^\top$.
 %



\RestyleAlgo{boxed}
\DecMargin{0.75em}
\begin{algorithm}[H]

\SetKwInOut{Input}{input} 
\SetKwInOut{Output}{output} 

\Input{$n\times n$ matrices $X_0,Y_0,\Phi,\Psi$}

\Output{Minimum-norm-solution of $X_0Y+XY_0=-X_0\Psi-\Phi Y_0$}

1. Compute the SVD $ X_0=U_xS_xV_x^\top$ and $Y_0=U_yS_yV_y^\top$ and let $\alpha=\mathrm{diag}(S_x), \beta=\mathrm{diag}(S_y)$ be the vectors of singular values. 

2. Let $C=-X_0\Psi-\Phi Y_0$ and $\hat{C}=U_x^\top CV_y$. 

3. Compute $\hat{X}=({\hat{\hat{y}}_{ij}}),\hat{Y}=({\hat{\hat{x}}_{ij}})$ by means of (\ref{equ:solution}).

4. Set $X=U_x\hat{X}U_y^\top$ and $Y=V_x\hat{Y}V_y^\top$.

\caption{\label{algo:project} \small Algorithm for projection on the tangent space of $BO(n)$.}
\end{algorithm}

\section{Examples}
\label{sec:exp}

We implemented the manifold $BO(n)$ of our method as \emph{biorthogonalfactory()} in Manopt, and, as numerical algorithm, we chose the conjugate gradient solver contained in Manopt.

 \subsection{Random matrices}
 To study the behavior of our optimization, we first did numerical tests with random matrices. We consider the following simple model problem:
\begin{equation}
\label{problem:modelproblem}
\min_{XY=I} {\|X-\Phi\|^2_F+\|Y-\Psi\|^2_F }
\end{equation}
where $\Phi, \Psi$ are given matrices  in $M(n)$. 
%

We compared our manifold method with the penalty method,   
\begin{equation}
\label{problem:penaltyproblem}
\min_{X,Y\in M(n)} {\|X-\Phi\|^2_F+\|Y-\Psi\|^2_F+\alpha\|XY-I\|^2_F },
\end{equation}
where $\alpha > 0$ is some parameter. Note that such a formulation in general does not guarantee a feasible solution, but only an approximately feasible one. 
We used a conjugate gradient solver on the Euclidean space implemented in Manopt to numerically solve the problem (\ref{problem:penaltyproblem}).



\begin{figure}[!ht]
\begin{center}
\figurewidth=0.8\linewidth
\figureheight=7.5cm
%
%
\begin{tikzpicture}

\begin{axis}[%
width=\figurewidth,
height=\figureheight,
at={(1.011in,0.642in)},
scale only axis,
xmin=0,
xmax=100,
xlabel={\small Iteration number},
ymin=2000,
ymax=10000,
ylabel={\small Cost function},
axis background/.style={fill=white},
legend style={legend cell align=left,align=left,draw=white!15!black}
]
\addplot [color=red,solid,line width=3pt]
  table[row sep=crcr]{%
0	9917.56571357361\\
1	9758.39415360508\\
2	9451.74406440742\\
3	8882.52978290605\\
4	7900.90921722649\\
5	6435.38162750073\\
6	4775.31466354163\\
7	3643.89569647558\\
8	3312.21623653891\\
9	3196.76065691419\\
10	3137.89593802091\\
11	3107.96034715347\\
12	3081.24250619653\\
13	3068.66293332801\\
14	3059.56311430666\\
15	3053.36657434988\\
16	3050.90390650703\\
17	3049.55400668994\\
18	3048.17111424959\\
19	3046.87390462842\\
20	3045.75296845682\\
21	3045.41285877414\\
22	3045.22432237285\\
23	3044.97460625751\\
24	3044.53225215877\\
25	3044.14326933016\\
26	3043.94907196138\\
27	3043.79458376674\\
28	3043.59985564981\\
29	3043.49222191569\\
30	3043.43239152848\\
31	3043.40250019285\\
32	3043.36590806387\\
33	3043.30773587751\\
34	3043.21052124033\\
35	3043.13133992418\\
36	3043.08367062601\\
37	3043.06582991756\\
38	3043.05623882854\\
39	3043.04114376615\\
40	3043.0189636861\\
41	3042.95382912359\\
42	3042.92967724814\\
43	3042.90343079345\\
44	3042.87706082788\\
45	3042.85286460258\\
46	3042.84036437008\\
47	3042.82980443657\\
48	3042.82227997874\\
49	3042.81603183944\\
50	3042.81174423468\\
51	3042.8083214018\\
52	3042.80495167782\\
53	3042.80190850542\\
54	3042.80053765741\\
55	3042.79995054965\\
56	3042.79973161329\\
57	3042.79951621459\\
58	3042.79939734739\\
59	3042.79918921597\\
60	3042.7990668691\\
61	3042.79879034702\\
62	3042.79864507906\\
63	3042.79859670568\\
64	3042.79853335517\\
65	3042.79848978832\\
66	3042.79846168887\\
67	3042.79842633871\\
68	3042.79840492409\\
69	3042.79834048636\\
70	3042.79829820254\\
71	3042.79827363356\\
72	3042.79825293968\\
73	3042.79824360915\\
74	3042.79823807849\\
75	3042.79823484581\\
76	3042.79823343265\\
77	3042.79823276818\\
78	3042.79823224738\\
79	3042.79823160529\\
80	3042.7982307795\\
81	3042.79822988743\\
82	3042.79822932858\\
83	3042.79822881943\\
84	3042.79822856008\\
85	3042.798228363\\
86	3042.79822810368\\
87	3042.79822786923\\
88	3042.79822762055\\
89	3042.79822754249\\
90	3042.79822748325\\
91	3042.79822744622\\
92	3042.79822741646\\
93	3042.79822740524\\
94	3042.79822738511\\
95	3042.79822735958\\
96	3042.79822734573\\
97	3042.79822733908\\
98	3042.79822733186\\
99	3042.7982273274\\
100	3042.79822732416\\
};
\addlegendentry{\footnotesize Biorthogonal};

\addplot [color=blue,dashed,line width=1.5pt]
  table[row sep=crcr]{%
0	9917.56571357361\\
1	8018.66758525829\\
2	8018.66758525829\\
3	6879.27938825254\\
4	6309.32976358269\\
5	5343.57232372038\\
6	5343.57232372038\\
7	4827.72628360665\\
8	4419.79115693239\\
9	4419.79115693239\\
10	4088.77438433644\\
11	3890.21246576058\\
12	3634.83496791174\\
13	3485.90518941223\\
14	3327.43404937418\\
15	3327.43404937418\\
16	3284.77850390466\\
17	3233.07226554249\\
18	3161.17716993426\\
19	3161.17716993426\\
20	3115.13731360363\\
21	3089.26970817542\\
22	3067.98288180244\\
23	3067.98288180244\\
24	3055.35267977547\\
25	3043.85752913654\\
26	3040.34818480354\\
27	3040.34818480354\\
28	3031.1342308329\\
29	3023.17937862976\\
30	3019.79363829382\\
31	3012.92312890806\\
32	3005.54010627227\\
33	3004.14098749424\\
34	2992.84141000735\\
35	2985.84554354724\\
36	2982.63055063053\\
37	2981.6270467113\\
38	2975.95950565322\\
39	2974.90618558852\\
40	2974.90618558852\\
41	2971.45899053759\\
42	2967.72919868003\\
43	2966.36462069721\\
44	2966.36462069721\\
45	2962.92982681247\\
46	2959.42586380633\\
47	2957.85613766472\\
48	2957.85613766472\\
49	2954.61597787701\\
50	2951.06431726834\\
51	2949.34166839587\\
52	2949.34166839587\\
53	2946.42846079966\\
54	2943.12997569265\\
55	2941.09600683765\\
56	2941.09600683765\\
57	2938.27712913051\\
58	2934.91376481519\\
59	2933.36184076428\\
60	2933.36184076428\\
61	2930.29803615708\\
62	2927.54495707489\\
63	2925.45712742246\\
64	2923.17744462494\\
65	2920.99781759402\\
66	2917.54812872601\\
67	2915.46985125647\\
68	2912.61037741793\\
69	2910.509091752\\
70	2908.05902637632\\
71	2905.92949423039\\
72	2903.05360037123\\
73	2900.80022696269\\
74	2897.8062506314\\
75	2896.11486408963\\
76	2893.04270703487\\
77	2891.27182334386\\
78	2888.8695874692\\
79	2886.84041686565\\
80	2884.39171623212\\
81	2882.29026117757\\
82	2880.12541163186\\
83	2878.04490498081\\
84	2875.98314091903\\
85	2874.08304353395\\
86	2872.15049761577\\
87	2869.8551132913\\
88	2868.10405524347\\
89	2865.85270554297\\
90	2864.41177179321\\
91	2861.94387141484\\
92	2860.32254346643\\
93	2858.13345524981\\
94	2856.83449248642\\
95	2854.65467847651\\
96	2853.28295682004\\
97	2851.13804514017\\
98	2849.78051049358\\
99	2847.64682843139\\
100	2846.3051976824\\
};
\addlegendentry{\footnotesize Penalty $\alpha=10$};

\addplot [color=blue,dashdotted,line width=1.5pt]
  table[row sep=crcr]{%
0	9917.56571357361\\
1	8611.95938343465\\
2	8611.95938343465\\
3	8611.95938343465\\
4	7491.37428140101\\
5	7121.78952609912\\
6	6835.22027257073\\
7	6203.49799007504\\
8	6203.49799007504\\
9	5915.29383576361\\
10	5640.01571140837\\
11	5640.01571140837\\
12	5394.09078883791\\
13	5131.84247297201\\
14	5131.84247297201\\
15	4906.90163514036\\
16	4714.35255335299\\
17	4586.55330112804\\
18	4586.55330112804\\
19	4463.55821166803\\
20	4282.81555599375\\
21	4174.78437011036\\
22	4174.78437011036\\
23	4081.30822126522\\
24	3928.57918684974\\
25	3833.44934130689\\
26	3833.44934130689\\
27	3765.77655678266\\
28	3765.77655678266\\
29	3712.36140756968\\
30	3657.87294631924\\
31	3563.82700154612\\
32	3508.60424460832\\
33	3508.60424460832\\
34	3442.68592044231\\
35	3401.32043229642\\
36	3338.18065262567\\
37	3338.18065262567\\
38	3300.63906040506\\
39	3275.73735378161\\
40	3247.71434343751\\
41	3247.71434343751\\
42	3235.13639046829\\
43	3213.09173226711\\
44	3213.09173226711\\
45	3185.82674485795\\
46	3176.87146500573\\
47	3144.02333412706\\
48	3144.02333412706\\
49	3129.76010110471\\
50	3122.54062074611\\
51	3105.45886354745\\
52	3105.45886354745\\
53	3102.53789283821\\
54	3097.45970086945\\
55	3092.35715038664\\
56	3081.1976584009\\
57	3081.1976584009\\
58	3079.16434798249\\
59	3075.48345284116\\
60	3073.71565128401\\
61	3064.65057920221\\
62	3063.08690555488\\
63	3060.91642450704\\
64	3056.10564425088\\
65	3054.95246795938\\
66	3052.13011957322\\
67	3052.13011957322\\
68	3051.32525655539\\
69	3050.12992150135\\
70	3048.43058169732\\
71	3048.43058169732\\
72	3047.83250423913\\
73	3047.21750909052\\
74	3046.80388142871\\
75	3046.80388142871\\
76	3046.31883195695\\
77	3045.64910139477\\
78	3045.19650718333\\
79	3045.19650718333\\
80	3044.8646997526\\
81	3044.2545582837\\
82	3043.77437164796\\
83	3043.77437164796\\
84	3043.516968278\\
85	3042.86610596091\\
86	3042.2905069463\\
87	3041.39105162537\\
88	3041.39105162537\\
89	3041.17409707822\\
90	3040.81055652395\\
91	3040.09431551766\\
92	3039.69815857827\\
93	3039.69815857827\\
94	3039.37945903445\\
95	3039.23487757443\\
96	3038.8804810978\\
97	3038.8804810978\\
98	3038.81717522706\\
99	3038.68267595695\\
100	3038.5218923708\\
};
\addlegendentry{\footnotesize Penalty $\alpha=10^2$};

\addplot [color=blue,dotted,line width=1.5pt]
  table[row sep=crcr]{%
0	9917.56571357361\\
1	9917.56571357361\\
2	9556.5234220707\\
3	9556.5234220707\\
4	9089.72373806422\\
5	8816.27213846775\\
6	8738.90298713069\\
7	8738.90298713069\\
8	8329.27819190123\\
9	8127.41840933414\\
10	8127.41840933414\\
11	7937.45667220897\\
12	7747.19330664522\\
13	7672.4957465841\\
14	7393.38083399017\\
15	7126.78697406283\\
16	6861.0322810238\\
17	6563.12678584042\\
18	6338.95499709854\\
19	6076.26874185901\\
20	6076.26874185901\\
21	5931.00576851011\\
22	5931.00576851011\\
23	5789.68243872297\\
24	5658.43191749687\\
25	5564.40695803603\\
26	5395.7278572234\\
27	5278.17058592976\\
28	5278.17058592976\\
29	5141.49950323859\\
30	5029.31104447604\\
31	5029.31104447604\\
32	4913.32274904758\\
33	4826.62379118194\\
34	4789.32582757629\\
35	4666.43417478123\\
36	4666.43417478123\\
37	4558.56308269358\\
38	4558.56308269358\\
39	4511.17977167314\\
40	4464.24034292966\\
41	4412.72096579495\\
42	4374.04016849752\\
43	4357.97049758736\\
44	4194.13236654654\\
45	4147.01658538347\\
46	4069.05730292117\\
47	3999.7413378507\\
48	3999.7413378507\\
49	3965.0797884443\\
50	3897.35533705196\\
51	3838.04291952781\\
52	3838.04291952781\\
53	3804.0870535767\\
54	3755.97470051131\\
55	3713.71121400132\\
56	3670.76416979134\\
57	3625.75954579372\\
58	3591.00513263099\\
59	3549.47189691139\\
60	3533.53730804109\\
61	3483.47357021235\\
62	3483.47357021235\\
63	3461.35494254205\\
64	3440.60705153127\\
65	3440.60705153127\\
66	3418.66240715802\\
67	3398.98179650402\\
68	3385.62133657706\\
69	3385.62133657706\\
70	3375.62422722178\\
71	3357.15566764805\\
72	3343.05307450182\\
73	3307.97213576627\\
74	3307.97213576627\\
75	3292.22294999929\\
76	3280.74024756266\\
77	3280.74024756266\\
78	3263.4719780858\\
79	3255.20288660747\\
80	3244.05850999508\\
81	3232.63311654302\\
82	3225.57310348728\\
83	3212.797447674\\
84	3205.71030195598\\
85	3199.21164165516\\
86	3199.21164165516\\
87	3182.61153215873\\
88	3177.55735182226\\
89	3169.76927404839\\
90	3160.84443372045\\
91	3160.84443372045\\
92	3158.57185471726\\
93	3153.74707147073\\
94	3148.47882238267\\
95	3140.31583738729\\
96	3140.31583738729\\
97	3138.29699391615\\
98	3134.62581103705\\
99	3130.36023167313\\
100	3122.8141399747\\
};
\addlegendentry{\footnotesize Penalty $\alpha=10^3$};

\end{axis}
\end{tikzpicture}%
\caption{\label{fig:rand} \small Comparison of penalty and biorthogonal manifold optimization methods on the random matrices experiment. 
}
\end{center}
\end{figure}
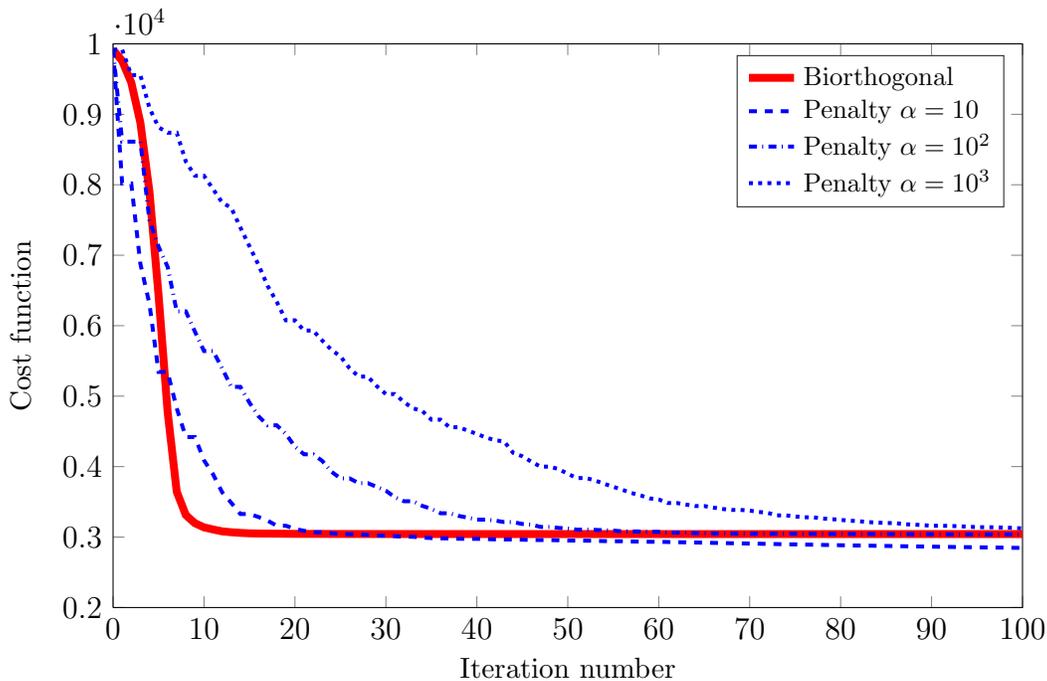



In our first experiment we choose choose $n=100$, i.e., the variables were matrices of order $100\times 100$.
The time per iteration was around $0.18$ \emph{sec} for both our biorthogonal manifold method as well as for the penalty method with $\alpha=100$. But, as can be seen by the figure, only about $12$ iterations were necessary with the biorthogonal method, while for the penalty method  $60$ iterations were necessary to obtain a comparable reduction of the cost function. This is typical for experiments in all dimensions.

In addition, the constraint error $\|XY-I\|_F$ is magnitudes bigger in the penalty method than in the biorthogonal method (where this error is practically zero). By enlarging the penalty parameter, this error can be made smaller, but only to a certain extent, because any minimization algorithm will get into trouble for large $\alpha$.

 \subsection{Functional maps}
 
 In computer graphics, one of the central problems arising in numerous applications, is finding intrinsic correspondence between 3D shapes (typically modeled as Riemannian surfaces and discretized as triangular meshes). 
Ovsjanikov et al. \cite{ovsjanikov2012functional} introduced an elegant framework for describing shape correspondence as linear operators.

Let $\mathcal{X}$ and $\mathcal{Y}$ denote two surfaces between which we want to find correspondence, and by $L^2(\mathcal{X})$ and $L^2(\mathcal{Y})$ the respective functional spaces of scalar fields. We further assume to be given orthogonal bases on these spaces, denoted by $\{\phi_i \}_{i\geq 1} \subseteq L^2(\mathcal{X})$ and $\{\psi_i \}_{i\geq 1} \subseteq L^2(\mathcal{Y})$, respectively. 
{\em Functional correspondence} is a linear operator $T : L^2(\mathcal{X}) \rightarrow L^2(\mathcal{Y})$ mapping functions from one surface to another. It can be expressed w.r.t. to the given orthogonal bases as 
\begin{eqnarray}
Tf &=& \sum_{i\geq 1} \langle f, \phi_i \rangle_{L^2(\mathcal{X})} T \phi_i = \sum_{i\geq 1} \langle f, \phi_i \rangle_{L^2(\mathcal{X})} \sum_{j\geq 1} \langle T \phi_i , \psi_i \rangle_{L^2(\mathcal{Y})} \psi_j \nonumber \\
&=& \sum_{i,j \geq 1} c_{ij} \langle f, \phi_i \rangle_{L^2(\mathcal{X})} \psi_j. 
\end{eqnarray}
The coefficients $ c_{ij} = \langle T \phi_i , \psi_i \rangle_{L^2(\mathcal{Y})}$ encode the correspondence and can be thought of as a translation of Fourier coefficients between the two bases. 
Truncating the expansion at the first $k$ coefficients yields a compact finite-dimensional representation of the functional correspondence operator in the form of a $k\times k$ matrix $C = (c_{ij})$.

Given a set of $q\geq k$ corresponding functions $g_l \approx T f_l$, $l = 1,\hdots, q$, finding correspondence boils down to solving a linear system of $qk$ equations in $k^2$ variables, 
\begin{eqnarray}
\sum_{i \geq 1} c_{ij} \langle f_l, \phi_i \rangle_{L^2(\mathcal{X})} =  \sum_{j \geq 1}  \langle g_l, \psi_j \rangle_{L^2(\mathcal{Y})}  
\label{eq:funcorr}
\end{eqnarray}
relating the respective Fourier coefficients. 
In matrix form, the system~(\ref{eq:funcorr}) can be expressed as 
\begin{eqnarray}
AC = B,
\label{eq:funcorr1}
\end{eqnarray}
where $A = (\langle f_l, \phi_i \rangle_{L^2(\mathcal{X})} )$ and $B = (\langle g_l, \psi_i \rangle_{L^2(\mathcal{Y})} )$ are $q\times k$ matrices of Fourier coefficients of the given corresponding functions.

Eynard et al. \cite{Eynard2016} proposed a formulation of the problem where {\em two} functional maps $T_1 : L^2(\mathcal{X}) \rightarrow L^2(\mathcal{Y})$ and $T_2 : L^2(\mathcal{Y}) \rightarrow L^2(\mathcal{X})$ satisfying $T_1 T_2 = \mathrm{id}$ are considered simultaneously. In matrix representation, this amounts to having $C_1 C_2 = I$.  
Finding the two functional maps is thus formulated as an optimization problem on the biorthogonal manifold,
\begin{eqnarray}
\min_{(C_1, C_2) \in BO(k)} \| AC_1 - B \|^2_\mathrm{F} + \| A - B C_2 \|^2_\mathrm{F} + \rho(C_1, C_2), 
\label{eq:funcorr_biorth}
\end{eqnarray}
where $\rho(C_1, C_2)$ is a term adding some regularization on $C_1, C_2$.

We reproduced the results of \cite{Eynard2016} on a pair of human shapes with known groundtruth correspondence from the FAUST dataset \cite{bogo2014faust}. As data, we used the SHOT descriptor \cite{tombari2010unique} of dimension $q=320$. As the orthogonal bases $\{ \phi_i, \psi_i \}_{i=1}^k$, we used $k=30$ first eigenvectors of the discretized Laplace-Beltrami operator of the two shapes.  
As the baselines, we used the least-squares solution of~(\ref{eq:funcorr1}) as proposed by Ovsjanikov et al. \cite{ovsjanikov2012functional}, the method of Huang et al. \cite{huang2014functional} and the approximate solution of~(\ref{eq:funcorr_biorth}) using the penalty method as proposed by Eynard et al. \cite{Eynard2016}, where the optimization on the biorthogonal manifold is replaced by an unconstrained optimization with an additional penalty term $\alpha \| C_1 C_2 - I\|^2_\mathrm{F}$ (we used the penalty weight $\alpha = 10^6$).

We solved~(\ref{eq:funcorr_biorth}) using optimization on the biorthogonal manifold using conjugate gradient method.  
Following Eynard et al. \cite{Eynard2016}, we also included a penalty term in~(\ref{eq:funcorr_biorth}) promoting a funnel-shape structure of $C_1$ and $C_2$, 
$$
\rho(C_1, C_2) = \| C_1 \odot W \|_2 + \| C_2 \odot W \|_2,
$$
where $\odot$ denotes the element-wise (Hadamard) matrix product, and $W$ is a fixed matrix 
(for additional details, the reader is referred to \cite{Eynard2016}).

Figure~\ref{fig:Kim} evaluates the correspondence quality using the Princeton protocol \cite{kim2011blended}, depicting the percentage of correspondences falling within a certain error radius w.r.t the groundtruth. Higher curves represent better correspondence. 
%

\begin{figure}[!ht]
\begin{center}
\figurewidth=0.8\linewidth
\figureheight=7.5cm
%
%
\begin{tikzpicture}

\begin{axis}[%
width=\figurewidth,
height=\figureheight,
at={(0.758in,0.542in)},
scale only axis,
separate axis lines,
every outer x axis line/.append style={black},
every x tick label/.append style={font=\color{black}},
xmin=0,
xmax=0.5,
xtick={  0, 0.1, 0.2, 0.3, 0.4, 0.5},
xlabel={\small Geodesic error},
xmajorgrids,
every outer y axis line/.append style={black},
every y tick label/.append style={font=\color{black}},
ymin=0,
ymax=100,
ytick={  0,  20,  40,  60,  80, 100},
ylabel={\small \% Correspondences},
ymajorgrids,
axis background/.style={fill=white},
legend style={at={(0.97,0.03)},anchor=south east,legend cell align=left,align=left,draw=black}
]
\addplot [color=magenta,solid,line width=1.5pt]
  table[row sep=crcr]{%
0	1.10336817653891\\
0.01	10.5110336817654\\
0.02	25.8057491289199\\
0.03	39.5978513356562\\
0.04	50.2613240418118\\
0.05	56.4605110336818\\
0.06	59.9883855981417\\
0.07	62.4274099883856\\
0.08	64.2784552845528\\
0.09	66.0278745644599\\
0.1	67.167537746806\\
0.11	68.3797909407666\\
0.12	69.7662601626016\\
0.13	70.9494773519164\\
0.14	71.8931475029036\\
0.15	72.6190476190476\\
0.16	73.5264227642276\\
0.17	74.2523228803717\\
0.18	74.753193960511\\
0.19	75.2395470383275\\
0.2	75.6605691056911\\
0.21	76.154181184669\\
0.22	76.9236353077817\\
0.23	77.8745644599303\\
0.24	79.0577816492451\\
0.25	80.2627758420441\\
0.26	81.2354819976771\\
0.27	82.3896631823461\\
0.28	83.3188153310105\\
0.29	84.0447154471545\\
0.3	85.075493612079\\
0.31	85.9320557491289\\
0.32	86.4837398373984\\
0.33	86.955574912892\\
0.34	87.3983739837398\\
0.35	87.8339140534263\\
0.36	88.3420441347271\\
0.37	88.835656213705\\
0.38	89.4091173054588\\
0.39	90.0842044134727\\
0.4	90.6286295005807\\
0.41	91.3763066202091\\
0.42	92.0005807200929\\
0.43	92.5450058072009\\
0.44	92.9079558652729\\
0.45	93.2200929152149\\
0.46	93.3797909407666\\
0.47	93.6120789779326\\
0.48	93.8008130081301\\
0.49	93.9024390243902\\
0.5	93.9169570267131\\
0.51	93.931475029036\\
0.52	93.931475029036\\
0.53	93.9605110336818\\
0.54	93.9677700348432\\
0.55	94.0766550522648\\
0.56	94.2726480836237\\
0.57	94.4033101045296\\
0.58	94.4613821138211\\
0.59	94.5702671312427\\
0.6	94.7081881533101\\
0.61	94.8170731707317\\
0.62	95.0130662020906\\
0.63	95.1074332171893\\
0.64	95.383275261324\\
0.65	95.4776422764228\\
0.66	95.7099303135889\\
0.67	95.9349593495935\\
0.68	96.1309523809524\\
0.69	96.2761324041812\\
0.7	96.4793844367015\\
0.71	96.5882694541231\\
0.72	96.9149245063879\\
0.73	97.2343205574913\\
0.74	97.5972706155633\\
0.75	98.0473286875726\\
0.76	98.5627177700348\\
0.77	98.6280487804878\\
0.78	98.6353077816492\\
0.79	98.6353077816492\\
0.8	98.6570847851336\\
0.81	98.6570847851336\\
0.82	98.7441927990708\\
0.83	98.8240418118467\\
0.84	99.0490708478513\\
0.85	99.4918699186992\\
0.86	99.7314169570267\\
0.87	99.8620789779326\\
0.88	100\\
0.89	100\\
0.9	100\\
0.91	100\\
0.92	100\\
0.93	100\\
0.94	100\\
0.95	100\\
0.96	100\\
0.97	100\\
0.98	100\\
0.99	100\\
1	100\\
};
\addlegendentry{\footnotesize Ovsjanikov et al.};

\addplot [color=green,solid,line width=1.5pt]
  table[row sep=crcr]{%
0	3.94163763066202\\
0.01	26.0670731707317\\
0.02	44.5194541231127\\
0.03	53.4189895470383\\
0.04	60.5473286875726\\
0.05	65.8028455284553\\
0.06	69.3379790940767\\
0.07	71.7770034843206\\
0.08	73.7950058072009\\
0.09	75.4427990708479\\
0.1	76.8873403019744\\
0.11	78.1794425087108\\
0.12	79.1521486643438\\
0.13	80.4878048780488\\
0.14	81.5403600464576\\
0.15	82.585656213705\\
0.16	83.4131823461092\\
0.17	84.0592334494773\\
0.18	84.9157955865273\\
0.19	85.5110336817654\\
0.2	86.2369337979094\\
0.21	86.8321718931475\\
0.22	87.5943670150987\\
0.23	88.1678281068525\\
0.24	88.8937282229965\\
0.25	89.3002322880372\\
0.26	89.808362369338\\
0.27	90.1350174216028\\
0.28	90.5705574912892\\
0.29	90.9843205574913\\
0.3	91.3472706155633\\
0.31	91.7029616724739\\
0.32	91.9933217189315\\
0.33	92.3199767711963\\
0.34	92.6393728222996\\
0.35	93.0531358885017\\
0.36	93.4741579558653\\
0.37	93.8516260162602\\
0.38	94.178281068525\\
0.39	94.5993031358885\\
0.4	95.0421022067364\\
0.41	95.448606271777\\
0.42	95.7534843205575\\
0.43	96.0728803716609\\
0.44	96.4430894308943\\
0.45	96.6898954703833\\
0.46	96.8931475029036\\
0.47	97.0528455284553\\
0.48	97.1399535423926\\
0.49	97.2198025551684\\
0.5	97.2851335656214\\
0.51	97.3577235772358\\
0.52	97.4157955865273\\
0.53	97.5391986062718\\
0.54	97.6190476190476\\
0.55	97.7424506387921\\
0.56	97.8513356562137\\
0.57	98.0255516840883\\
0.58	98.1634727061556\\
0.59	98.23606271777\\
0.6	98.4901277584204\\
0.61	98.7151567944251\\
0.62	98.9547038327526\\
0.63	99.2813588850174\\
0.64	99.7604529616725\\
0.65	99.9491869918699\\
0.66	99.9709639953542\\
0.67	100\\
0.68	100\\
0.69	100\\
0.7	100\\
0.71	100\\
0.72	100\\
0.73	100\\
0.74	100\\
0.75	100\\
0.76	100\\
0.77	100\\
0.78	100\\
0.79	100\\
0.8	100\\
0.81	100\\
0.82	100\\
0.83	100\\
0.84	100\\
0.85	100\\
0.86	100\\
0.87	100\\
0.88	100\\
0.89	100\\
0.9	100\\
0.91	100\\
0.92	100\\
0.93	100\\
0.94	100\\
0.95	100\\
0.96	100\\
0.97	100\\
0.98	100\\
0.99	100\\
1	100\\
};
\addlegendentry{\footnotesize  Huang et al.};

\addplot [color=blue,solid,line width=1.5pt]
  table[row sep=crcr]{%
0	1.68408826945412\\
0.01	14.3583042973287\\
0.02	30.6983159117305\\
0.03	44.2871660859466\\
0.04	53.9779326364692\\
0.05	62.7831010452962\\
0.06	69.3960511033682\\
0.07	74.274099883856\\
0.08	78.0270034843206\\
0.09	80.4515098722416\\
0.1	82.3678861788618\\
0.11	84.0519744483159\\
0.12	85.2497096399535\\
0.13	86.1425667828107\\
0.14	86.6579558652729\\
0.15	87.1806039488966\\
0.16	87.4637049941928\\
0.17	87.8193960511034\\
0.18	88.0734610917538\\
0.19	88.2549361207898\\
0.2	88.4872241579559\\
0.21	88.6977351916376\\
0.22	88.9590592334495\\
0.23	89.0606852497096\\
0.24	89.3873403019744\\
0.25	89.7502903600465\\
0.26	90.0261324041812\\
0.27	90.2221254355401\\
0.28	90.3092334494773\\
0.29	90.4544134727062\\
0.3	90.6141114982578\\
0.31	90.6939605110337\\
0.32	90.8463995354239\\
0.33	90.9770615563298\\
0.34	91.144018583043\\
0.35	91.3400116144019\\
0.36	91.6594076655052\\
0.37	92.3127177700348\\
0.38	92.84262485482\\
0.39	93.5322299651568\\
0.4	94.3524970963995\\
0.41	95.1437282229965\\
0.42	95.8405923344948\\
0.43	96.5301974448316\\
0.44	97.147212543554\\
0.45	97.7061556329849\\
0.46	98.1779907084785\\
0.47	98.6498257839721\\
0.48	98.8966318234611\\
0.49	99.1869918699187\\
0.5	99.390243902439\\
0.51	99.455574912892\\
0.52	99.6225319396051\\
0.53	99.6660859465737\\
0.54	99.6806039488966\\
0.55	99.6806039488966\\
0.56	99.6806039488966\\
0.57	99.6806039488966\\
0.58	99.7096399535424\\
0.59	99.7459349593496\\
0.6	99.7459349593496\\
0.61	99.8040069686411\\
0.62	99.818524970964\\
0.63	99.8548199767712\\
0.64	99.9201509872242\\
0.65	99.9564459930314\\
0.66	99.9854819976771\\
0.67	100\\
0.68	100\\
0.69	100\\
0.7	100\\
0.71	100\\
0.72	100\\
0.73	100\\
0.74	100\\
0.75	100\\
0.76	100\\
0.77	100\\
0.78	100\\
0.79	100\\
0.8	100\\
0.81	100\\
0.82	100\\
0.83	100\\
0.84	100\\
0.85	100\\
0.86	100\\
0.87	100\\
0.88	100\\
0.89	100\\
0.9	100\\
0.91	100\\
0.92	100\\
0.93	100\\
0.94	100\\
0.95	100\\
0.96	100\\
0.97	100\\
0.98	100\\
0.99	100\\
1	100\\
};
\addlegendentry{\footnotesize Penalty};

\addplot [color=red,solid,line width=3pt]
  table[row sep=crcr]{%
0	1.29210220673635\\
0.01	8.13734030197445\\
0.02	21.4576074332172\\
0.03	34.4802555168409\\
0.04	46.6536004645761\\
0.05	56.8742740998839\\
0.06	64.8373983739837\\
0.07	70.9349593495935\\
0.08	75.5081300813008\\
0.09	79.8054587688734\\
0.1	82.6001742160279\\
0.11	85.7868757259001\\
0.12	89.5325203252033\\
0.13	92.6538908246225\\
0.14	93.6701509872242\\
0.15	94.359756097561\\
0.16	94.8098141695703\\
0.17	95.1582462253194\\
0.18	95.3106852497096\\
0.19	95.383275261324\\
0.2	95.4123112659698\\
0.21	95.4340882694541\\
0.22	95.4631242740999\\
0.23	95.4703832752613\\
0.24	95.4703832752613\\
0.25	95.4849012775842\\
0.26	95.4921602787456\\
0.27	95.5066782810685\\
0.28	95.5647502903601\\
0.29	95.7897793263647\\
0.3	96.0148083623693\\
0.31	96.2470963995354\\
0.32	96.5447154471545\\
0.33	96.9149245063879\\
0.34	97.32868757259\\
0.35	97.7497096399535\\
0.36	98.3812427409988\\
0.37	99.2668408826945\\
0.38	99.5572009291521\\
0.39	99.6878629500581\\
0.4	99.7822299651568\\
0.41	99.8040069686411\\
0.42	99.818524970964\\
0.43	99.8257839721254\\
0.44	99.8257839721254\\
0.45	99.8257839721254\\
0.46	99.8257839721254\\
0.47	99.8257839721254\\
0.48	99.8257839721254\\
0.49	99.8257839721254\\
0.5	99.8257839721254\\
0.51	99.8257839721254\\
0.52	99.8257839721254\\
0.53	99.8257839721254\\
0.54	99.8403019744483\\
0.55	99.8765969802555\\
0.56	99.9274099883856\\
0.57	99.9854819976771\\
0.58	99.9927409988386\\
0.59	100\\
0.6	100\\
0.61	100\\
0.62	100\\
0.63	100\\
0.64	100\\
0.65	100\\
0.66	100\\
0.67	100\\
0.68	100\\
0.69	100\\
0.7	100\\
0.71	100\\
0.72	100\\
0.73	100\\
0.74	100\\
0.75	100\\
0.76	100\\
0.77	100\\
0.78	100\\
0.79	100\\
0.8	100\\
0.81	100\\
0.82	100\\
0.83	100\\
0.84	100\\
0.85	100\\
0.86	100\\
0.87	100\\
0.88	100\\
0.89	100\\
0.9	100\\
0.91	100\\
0.92	100\\
0.93	100\\
0.94	100\\
0.95	100\\
0.96	100\\
0.97	100\\
0.98	100\\
0.99	100\\
1	100\\
};
\addlegendentry{\footnotesize Biorthogonal};

\end{axis}
\end{tikzpicture}%
\caption{\label{fig:Kim} \small Performance of different correspondence method on a pair of FAUST shapes evaluated using the Princeton
protocol. 
}
\end{center}
\end{figure}
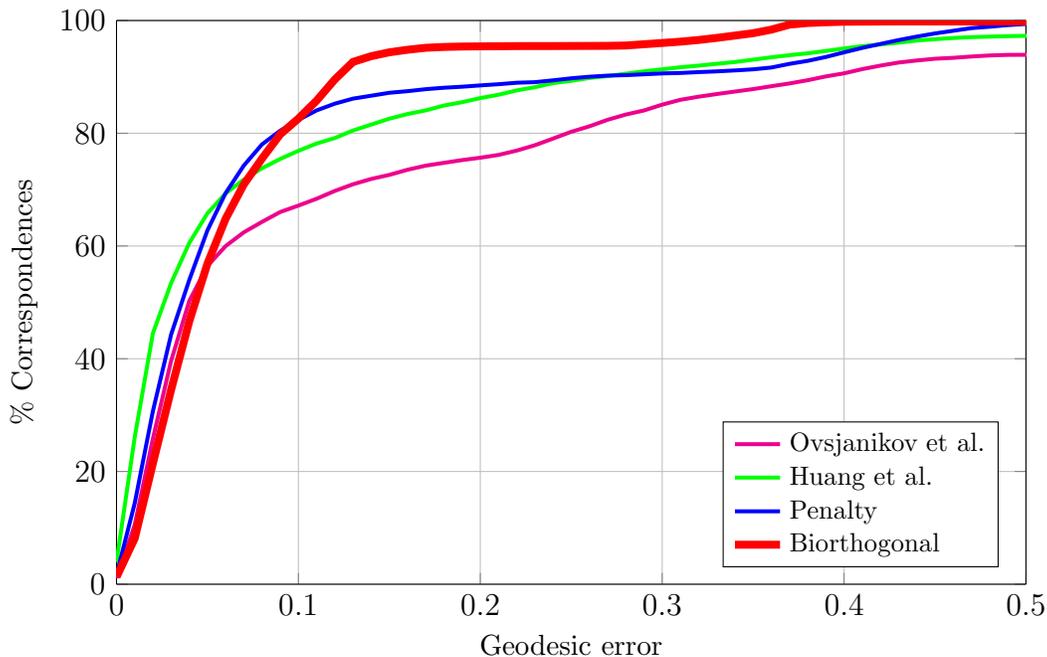

%
%

\section{Conclusions}

In this paper, we introduced and analyzed the biorthogonal manifold, allowing to efficiently perform optimization over pairs of orthogonal matrices. In future work, we will focus on applications where such problems arise.

\section*{Acknowledgement}
This research was supported by the ERC Starting Grant
No. 307047 (COMET). 

\bibliographystyle{plain}
\bibliography{main}

\end{document}